    \definecolor{purple}{rgb}{.5, 0, 0.5}
\newtheorem{theorem}{Theorem}[section]
\newtheorem{corollary}[theorem]{Corollary}
\newtheorem{lemma}[theorem]{Lemma}
\newtheorem{proposition}[theorem]{Proposition}
\newtheorem{claim}[theorem]{Claim} 
\newtheorem*{mainthm}{Theorem \ref{thm:main}}
\newtheorem*{cor1}{Corollary \ref{cor1}}
\newtheorem*{cor2}{Corollary \ref{cor2}}
\theoremstyle{definition}
\newtheorem{definition}[theorem]{Definition}
\newtheorem{problem}[theorem]{Problem}
\newtheorem{remark}[theorem]{Remark}
\theoremstyle{definition}
\newtheorem{question}[theorem]{Question}
\newtheorem*{question*}{Question} 
\newtheorem{questions}[theorem]{Questions}
\newtheorem{conventions}[theorem]{Conventions}
\newtheorem{observation}[theorem]{Observation}
\newcommand{\D}{\mathcal{D}}
\newcommand{\blue}[1]{\textcolor{blue}{#1}}
\DeclareMathOperator{\MS}{MS}
\DeclareMathOperator{\IS}{IS}
    \author[Seungwon Kim]{Seungwon Kim}
    \address{Sungkyunkwan University\\Suwon, Gyeonggi, 16419 Republic of Korea}
    \email{seungwon.kim@skku.edu}
    \author[Maggie Miller]{Maggie Miller}
    \address{University of Texas at Austin\\Austin, TX, 78712 USA}
    \email{maggie.miller.math@gmail.com}
  \author[Jaehoon Yoo]{Jaehoon Yoo}
    \address{Indiana University Bloomington\\Bloomington, IN, 47405 USA}
    \email{jaehyoo@iu.edu}
\thanks{SK and JY are supported by a National Research Foundation of Korea (NRF) grants funded by the Korean government (MSIT) (No.2022R1C1C2004559). MM is partially supported by a Clay Research Fellowship and NSF grant DMS-2404810.}
\title[Alternating links and surfaces in $B^4$]{Non-split, alternating links bound unique Seifert surfaces in the 4-ball}
\begin{document}

\begin{abstract}
We show that any two same-genus, oriented, boundary parallel surfaces bounded by a non-split, alternating link into the 4-ball are smoothly isotopic fixing boundary. In other words, any same-genus Seifert surfaces for a non-split, alternating link become smoothly isotopic fixing boundary once their interiors are pushed into the 4-ball. We conclude that a smooth surface in $S^4$ obtained by gluing two Seifert surfaces for a non-split alternating link is always smoothly unknotted. 
\end{abstract}


\maketitle


\section{Introduction}\label{sec:intro}
   In this paper, we discuss isotopy of boundary parallel surfaces in the 4-ball.
   
   \begin{conventions}
       All surfaces, knots, and links are oriented; all embeddings and isotopies are smooth.  A {\emph{Seifert surface}} will always denote an oriented, not necessarily connected surface embedded in $S^3$ that has no closed components. The boundary of such a surface $S$ is a link $L$, and we say that $S$ is a Seifert surface for $L$.
       
   Typically, we will obtain surfaces first as Seifert surfaces in $S^3$. When we say that two surfaces in $S^3$ are isotopic rel.\ boundary in $B^4$, we implicitly mean that the surfaces become isotopic rel.\ boundary after their interiors are pushed slightly into the interior of $B^4$. We prove the following main theorem. 
   \end{conventions}

   \begin{theorem}\label{thm:main}
Any two same-genus Seifert surfaces for a non-split, alternating link are smoothly isotopic rel.\ boundary in $B^4$.
\end{theorem}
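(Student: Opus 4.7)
My plan is to combine a structural classification of minimal genus Seifert surfaces for non-split alternating links with the Tait flyping theorem, reducing the problem to a local 4-dimensional lemma about $\pi$-rotations of tangles.

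As a first step, I would appeal to classical work of Crowell and Murasugi on the genus, together with a structure theorem for minimal genus Seifert surfaces of non-split alternating links (in the spirit of work of Hirasawa and Sakuma on the Kakimizu complex), asserting that every such surface is ambient isotopic in $S^3$ to the surface $\Sigma_D$ obtained by applying Seifert's algorithm to some reduced alternating diagram $D$ of $L$. Thus Theorem~\ref{thm:main} reduces to showing: for any two reduced alternating diagrams $D$ and $D'$ of $L$, the surfaces $\Sigma_D$ and $\Sigma_{D'}$ are isotopic rel.\ boundary in $B^4$.

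By the Tait flyping theorem of Menasco and Thistlethwaite, $D$ and $D'$ are connected by a finite sequence of planar isotopies and flypes. Since planar isotopies induce ambient isotopic Seifert surfaces, by induction I may assume $D'$ is obtained from $D$ by a single flype. A flype is a $\pi$-rotation of a tangle $T$ in a 3-ball $B \subset S^3$ together with sliding a crossing past $T$, so the change on the Seifert surface is supported in a small 3-ball. The key claim is that this local change can be undone by an isotopy through a 4-ball thickening of $B$ fixing boundary: while a $\pi$-rotation of a properly embedded surface in a 3-ball need not be trivial rel.\ boundary in $S^3$, it can be realized by a rel.\ boundary isotopy inside a 4-ball by ``spreading out'' the rotation through the extra dimension, which is the essential use of the codimension jump.

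The main obstacle I foresee is this last local lemma. The challenge lies in setting up the local model precisely --- accounting for both the half-twist and the moved crossing --- and in verifying that the local 4-dimensional isotopy glues into a global isotopy of $\Sigma_D$ to $\Sigma_{D'}$ that fixes $L \subset S^3 = \partial B^4$ throughout. Particular care will be needed because Seifert's algorithm can behave subtly near a flype region (e.g., the Seifert circles on either side of the moved crossing may connect in different ways), so one must identify $\Sigma_D \cap B$ and $\Sigma_{D'} \cap B$ explicitly before attempting the 4-dimensional isotopy.
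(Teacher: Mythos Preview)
Your first step contains a fatal error: the structure theorem you invoke is false. It is \emph{not} true that every minimal genus Seifert surface for a non-split alternating link is ambient isotopic in $S^3$ to one obtained by applying Seifert's algorithm to some reduced alternating diagram. Indeed, the very reference you cite for this (Hirasawa--Sakuma) is the standard source for counterexamples; the paper explicitly flags this twice, noting that ``a minimal genus Seifert surface for a non-split alternating link need not arise from Seifert's algorithm.'' More concretely, the alternating knots $K_n$ of Roberts mentioned in the introduction have at least $2^{2n-1}$ isotopy classes of minimal genus Seifert surfaces, while the Tait flyping theorem forces the number of reduced alternating diagrams (and hence of Seifert-algorithm surfaces) to be far smaller. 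So your reduction to a flype-by-flype comparison simply does not cover all the surfaces in question, and the argument collapses at the outset.

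For contrast, the paper's proof avoids any diagrammatic classification of the surfaces. It uses Kakimizu's connectivity of $\MS(L)$ to reduce to the case where $\Sigma_1$ and $\Sigma_2$ have disjoint interiors, so that $F=\Sigma_1\cup\Sigma_2$ is a closed essential surface containing $L$. Kindred's Crossing Tube Lemma then produces a standard tube of $F$ over some crossing $c$; removing the common band there yields surfaces $\Sigma_1',\Sigma_2'$ for the link $L'$ obtained by smoothing $c$, still minimal genus by the Crowell--Murasugi genus formula, and with $g_e(L')<g_e(L)$. Induction on the embedded genus finishes the argument. The 4-dimensionality enters only at the very end, in the trivial observation that an isotopy rel.\ boundary of $\Sigma_1'$ to $\Sigma_2'$ in $B^4$ extends over the fixed band $b$ --- no local flype lemma is needed.
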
 

\begin{corollary}\label{cor1}
    Up to isotopy rel.\ boundary in $B^4$, a non-split, alternating link $L$ bounds a unique minimal genus Seifert surface $S$. Any other Seifert surface for $L$ is isotopic rel.\ boundary in $B^4$ to a connected sum of $S$ and some number of standard tori in $S^4$.
\end{corollary}

\begin{corollary}\label{cor2}
    Let $S_1, S_2$ be Seifert surfaces for a non-split, alternating link $L$ in $S^3$. Viewing $S^3$ as an equator of $S^4$, push the interiors of $S_1$ and $S_2$ off $S^3$ toward opposite normal directions of $S^3$ to obtain surfaces $S_1^+, S_2^-$ in $S^4$ with the same boundary but disjoint interior. The closed surface $S_1^+\cup S_2^-$ bounds a handlebody smoothly embedded in $S^4$, i.e.,\ is smoothly unknotted.
\end{corollary}

    There are certain cases where a knot or link is already known to have a unique minimal genus Seifert surface up to isotopy rel.\ boundary in $B^4$. For example, a fibered link $L$ bounds a unique minimal genus Seifert surface up to isotopy rel.\ boundary in $S^3$. This condition is also known to hold for some other simple families of knots, see e.g.,\ \cite{lyon-simple-unique,kobayshi-uniqueness}. Theorem \ref{thm:main} is the first theorem proving uniqueness of minimal genus Seifert surfaces for a family of knots up to isotopy rel.\ boundary in $B^4$ when uniqueness does not hold in $S^3$. 
    
    Many surfaces that are distinct in $S^3$ become isotopic rel.\ boundary in $B^4$. See \cite[Section 2.1]{hayden-kim-miller-park-sundberg} for a detailed discussion of constructions in the literature.  However, the first two authors along with Hayden, Park, and Sundberg constructed many examples of connected, minimal genus Seifert surfaces with common boundary that do not become isotopic rel.\ boundary in $B^4$ \cite{hayden-kim-miller-park-sundberg}. 
   Even more examples of this behavior have since been produced by Aka--Feller--A.\ B.\ Miller--Wieser \cite{aka-feller-miller-wieser}. Thus in general, we do not expect a knot or link to bound a unique minimal genus Seifert surface, even up to isotopy rel.\ boundary in $B^4$.

   In Theorem \ref{thm:main}, we focus on the setting of alternating links. A non-split, alternating link may bound multiple minimal genus Seifert surfaces up to isotopy rel.\ boundary in $S^3$. As an explicit example, Trotter showed the knot {\tt{9$_4$}} bounds at least two distinct minimal genus Seifert surfaces \cite[Theorem 2.6]{trotter}. Roberts further constructed a family of alternating knots $\{K_n\}_{n\in\mathbb{N}}$ such that $K_n$ bounds at least $2^{2n-1}$ distinct minimal genus Seifert surfaces \cite{roberts}. However, we conclude from Theorem \ref{thm:main} that these Seifert surfaces are isotopic rel.\ boundary in $B^4$.

Corollary \ref{cor2} is particularly interesting given the context of the following open problem.

\begin{question}\label{q:seifertunknotting}
   Let $S_1, S_2$ be Seifert surfaces for a knot $K$ in $S^3$. Viewing $S^3$ as an equator of $S^4$, push the interiors of $S_1$ and $S_2$ off $S^3$ toward opposite normal directions of $S^3$ to obtain surfaces $S_1^+, S_2^-$ in $S^4$ with the same boundary but disjoint interior. Is the closed surface $S_1^+\cup S_2^-$ necessarily  unknotted? 
\end{question}

The answer to Question \ref{q:seifertunknotting} is ``yes," in the topological category by Conway--Powell \cite{conwaypowell}. (As a consequence, it was already known by \cite{conwaypowell} that the surface $S_1^+\cup S_2^-$ in Corollary \ref{cor2} bounds a locally flat handlebody in $S^4$.) In the smooth category, Corollary \ref{cor2} answers Question \ref{q:seifertunknotting} affirmatively in the case that $K$ is alternating.

Question \ref{q:seifertunknotting} is a well-known subquestion of the smooth unknotting conjecture, which posits that any smooth, oriented surface in $S^4$ whose complement has cyclic fundamental group is smoothly unknotted. Note that an affirmative answer to Question \ref{q:seifertunknotting} for would {\emph{not}} imply that same-genus Seifert surfaces $\Sigma_1, \Sigma_2$ for a non-alternating knot $K$ are isotopic rel.\ boundary once pushed into $B^4$, which is generally false. Even if $\Sigma_1$ and $\Sigma_2$ are not isotopic rel.\ boundary in $B^4$, their union as an embedded surface in $S^4$ may be smoothly unknotted, e.g.,\ the examples of \cite{hayden-kim-miller-park-sundberg} and \cite{aka-feller-miller-wieser} have this property. 
See discussion in \cite[Example 2.1]{hayden-kim-miller-park-sundberg}.  

\section{The Kakimizu complex}\label{sec:background}

The proof of Theorem \ref{thm:main} hinges on understanding the space of incompressible Seifert surfaces for a non-split, alternating link. This space (for a general link) is called the {\emph{Kakimizu complex}}. In the literature, it is common to restrict to minimal genus surfaces -- for the sake of discussion, we give both definitions. 

\begin{definition}[Kakimizu {\cite{Kakimizu_paper}}]
    Let $L$ be a non-split link in $S^3$. The {\emph{Kakimizu complexes of $L$}}, denoted $\MS(L)$ and $\IS(L)$, are simplicial complexes constructed as follows.
    \begin{itemize}
   \item  The vertices of $\MS(L)$ are in bijection with proper isotopy (rel.\ boundary) classes of minimal genus Seifert surfaces for $L$.
   \item The vertices of $\IS(L)$ are in bijection with proper isotopy (rel.\ boundary) classes of incompressible Seifert surfaces for $L$.
    \item In each of $\MS(L), \IS(L)$, for $n>0$, there is an $n$-simplex with vertices $v_0,\ldots, v_n$ exactly when the isotopy classes corresponding to $v_0,\ldots, v_1$ admit Seifert surface representatives with mutually disjoint interiors.
    \end{itemize}
\end{definition}

The notations ``$\MS(L)$," and $``\IS(L)$," chosen by Kakimizu, are meant to remind the reader that these simplicial complexes are associated to {\bf{m}}inimal {\bf{s}}urfaces or {\bf{i}}ncompressible {\bf{s}}urfaces bounded by the link $L$.

\begin{theorem}[Kakimizu {\cite{Kakimizu_paper}}]\label{thm:kakimizu}
For any non-split link $L$, the complexes $\MS(L)$ and $\IS(L)$ are each connected.
\end{theorem}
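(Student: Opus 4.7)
The plan is to connect any two vertices $[S_0],[S_1]$ of $\MS(L)$ by a finite edge path, reducing the intersection between representatives via oriented cut-and-paste. After isotoping $S_0,S_1$ to have disjoint parallel-pushoff boundaries, making them transverse, and minimizing $|S_0\cap S_1|$ in their isotopy classes, I would use the incompressibility of minimal genus Seifert surfaces in $X:=S^3\setminus\nu(L)$, together with standard innermost-disc arguments, to assume every intersection component is essential on both surfaces. If $S_0\cap S_1=\emptyset$ then $[S_0]$ and $[S_1]$ already span an edge and we are done.

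Otherwise, the key construction is an oriented double-curve sum along $S_0\cap S_1$: the co-orientations of the two Seifert surfaces canonically resolve each intersection circle, yielding an embedded oriented surface $\Sigma$ with $\partial\Sigma=\partial S_0\sqcup\partial S_1$ and $\chi(\Sigma)=\chi(S_0)+\chi(S_1)$. Using non-splitness of $L$ together with additivity of the Thurston norm on $H_2(X,\partial X)$, I would argue that $\Sigma$ is isotopic to a disjoint union $T_0\sqcup T_1$ of two minimal genus Seifert surfaces for $L$, each bounded by the corresponding pushoff. Since $T_0\cap T_1=\emptyset$, the vertices $[T_0],[T_1]$ are joined by an edge of $\MS(L)$.

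To produce a path from $[S_0]$ to $[S_1]$, I would iterate, using a complexity such as the number of lifts of $S_0$ that a chosen lift of the current surface meets in the infinite cyclic cover $\tilde X\to X$ (equivalently, a carefully weighted count of intersection circles). Each iteration exhibits an edge-neighbor of the current vertex with strictly smaller complexity relative to $[S_0]$, and after finitely many steps we reach a vertex disjoint from $S_0$, hence adjacent to $[S_0]$.

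The hard part is the decomposition of $\Sigma$ into two minimal genus Seifert surfaces. A priori $\Sigma$ could contain a cobordism component between the two pushoffs of $L$, or (for split $L$) distribute components of $L$ across pieces that are not Seifert surfaces at all --- this is precisely where non-splitness is essential. I would handle it by working $\mathbb{Z}$-equivariantly in $\tilde X$: the deck-translation structure constrains how lifted pieces of the double-curve sum can assemble, and any deviation from a pair of genuine Seifert surfaces would either violate $\mathbb{Z}$-equivariance of the lifted isotopy classes or produce a compressing disk contradicting the minimality of $S_0$ or $S_1$. A secondary subtlety is choosing the complexity finely enough that re-minimizing intersections after each cut-and-paste does not stall; the cyclic-cover viewpoint seems cleanest for making this precise.
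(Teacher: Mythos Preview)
The paper does not give a proof of this statement: Theorem~\ref{thm:kakimizu} is quoted from Kakimizu \cite{Kakimizu_paper} and used as a black box in the proof of Theorem~\ref{thm:main}. So there is no in-paper argument to compare against; what follows is a comparison with Kakimizu's own proof.

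Your ingredients --- incompressibility of minimal genus Seifert surfaces, oriented cut-and-paste, and a complexity read off in the infinite cyclic cover $\tilde X$ --- are exactly the ones Kakimizu uses, and your instinct to run the whole argument $\mathbb{Z}$-equivariantly in $\tilde X$ is correct. The gap is in the inductive step. Performing the \emph{full} oriented double-curve sum $\Sigma$ of $S_0$ and the current surface does not in general produce an edge-neighbor of either vertex. First, $\Sigma$ need not split as $T_0\sqcup T_1$ at all: it can be a connected surface cobounding the two pushoffs of $L$, and additivity of the Thurston norm only controls $\chi$, not the number of components. Second, even when $\Sigma=T_0\sqcup T_1$, each $T_i$ is assembled from pieces of \emph{both} $S_0$ and $S_1$ and is typically disjoint from neither; you have exhibited an edge $[T_0][T_1]$ floating somewhere in $\MS(L)$ with no established adjacency to $[S_0]$ or $[S_1]$, so your iteration (``an edge-neighbor of the current vertex with strictly smaller complexity'') does not get started.

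Kakimizu's fix is to do only a \emph{partial} swap, one outermost region at a time in $\tilde X$. The lifts of one surface cut $\tilde X$ into a $\mathbb{Z}$-string of regions, a chosen lift of the other meets only finitely many consecutive ones, and one exchanges the piece of that lift in the extreme region for the complementary piece of the bounding lift. The projected surface is then genuinely disjoint from one of the original surfaces (so it \emph{is} an edge-neighbor), its lift occupies one fewer region (so the cyclic-cover complexity drops), and the Euler-characteristic bookkeeping you already wrote down shows it is again minimal genus. Iterating this outermost-region exchange yields the desired edge path. In short: right cover, right complexity, but the surgery must be localized rather than global.
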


The methods of Kakimizu suggest the following strengthened version of Theorem \ref{thm:kakimizu}.

\begin{definition}
    Fix an integer $k$ and a non-split link $L$ in $S^3$. Let $\IS_k(L)$ be the simplicial subcomplex of $\IS(L)$ that contains an $n$-simplex with vertices $v_0,\ldots, v_n$ exactly when incompressible Seifert surfaces representing $v_0,\ldots, v_n$ each have Euler characteristic at least $k$.
\end{definition}

\begin{theorem}\label{thm:filtration}
Given a non-split link $L$, for each integer $k$ the subcomplex $\IS_k(L)$ is connected.
\end{theorem}

Note that a Seifert surface for $L$ restricts to a properly embedded, oriented surface in $S^3\setminus\nu(L)$ that has no closed components nad has boundary a longitude on each component of $\partial S^3\setminus\nu(L)$. For convenience, we will refer interchangeably to such properly embedded surfaces and Seifert surfaces for $L$.
 
We will make use of the following definition, used by Kakimizu in \cite{Kakimizu_paper}. 
\begin{definition}\label{def:coveringdistance}
Let $S,S'$ be incompressible Seifert surfaces for a non-split link $L$. Let $S_0$ and $S'_0$ respectively be lifts of $S'$ to the infinite cyclic cover $M$ of $S^3\setminus\nu(L)$. For each integer $i$, let $S_i$ denote the translate of $S_0$ by the action of $i\in\mathbb{Z}$ and let $E_i$ denote the closure of the region of $M$ bounded by $-S_i\cup S_{i+1}$. Similarly let $S'_j$ denote the translate of $S'_0$ by the action of $j\in\mathbb{Z}$ and let $E'_j$ denote the closure of the region of $M$ bounded by $-S'_j\cup S'_{j+1}$. Isotope $S'$ (and hence equivariantly isotope $\{S'_j\}$) so as to minimize the number of integers $i$ for which $E'_0\cap E_i\neq\emptyset$. Let $a,b$ be integers with $E'_0\cap E_a,E'_0\cap E_b\neq\emptyset$ and $E'_0\cap E_i=\emptyset$ for $i>b$ and for $i<a$. The {\emph{covering distance}} $d(S,S')$ is defined to be $b-a$.
\end{definition}

Kakimizu \cite[Proposition 3.1(1)]{Kakimizu_paper} proved that the covering distance of $S$ and $S'$ is equal to the minimum length of a path between vertices corresponding to $S,S'$ in $\IS(L)$. Even without this fact (which we will not directly employ in this paper, so as to keep our arguments relatively self-contained), we might make the following observation.

\begin{observation}[Kakimizu \cite{Kakimizu_paper}]\label{observation:coveringzero}
Incompressible Seifert surfaces $S$ and $S'$ for $L$ satisfy $d(S,S')=0$ if and only if $S, S'$ are properly isotopic rel.\ boundary: in Definition \ref{def:coveringdistance}, if $d(S,S')=0$ then up to isotopy we have $E'_0\subset E_i$ for some $i$. Since the boundaries of $E'_0$ and $E_i$ are lifts of $S', S$ respectively, we must have $E'_0=E_i$ and hence $S,S'$ are isotopic rel.\ boundary in $S^3$.
\end{observation}

We now prove Theorem \ref{thm:filtration} via the following lemma, which mirrors that of \cite[Theorem 2.1]{Kakimizu_paper} and closely follows the same proof, taking care to consider the Euler characteristic of the produced surfaces.

\begin{lemma}\label{lem:kakimizuinduct}
Let $S,S'$ be non-isotopic, incompressible Seifert surfaces for a non-split link $L$. Let $n:=d(S,S')$. Then there is an incompressible Seifert surface $S''$ for $L$ such that $S''$ and $S'$ do not intersect in their interiors, $d(S,S'')<n$, and $\chi(S'')\ge\min\{\chi(S),\chi(S')\}$.
\end{lemma}

The following proof of Lemma \ref{lem:kakimizuinduct} is illustrated in Figure \ref{fig:lemma_filtration}.

\begin{figure}
\labellist
\pinlabel{$S_0$} at 32 -10
\pinlabel{\textcolor{red}{$S_1$}} at 110 -10
\pinlabel{$S_2$} at 188 -10
\pinlabel{$S_3$} at 265 -10
\pinlabel{$S_4$} at 344 -10
\pinlabel{$S'_{-1}$} at 68 160
\pinlabel{\textcolor{blue}{$S'_0$}} at 142 160
\pinlabel{$S'_1$} at 218 160
\pinlabel{$S'_2$} at 299 160
\pinlabel{$R$} at 128 40
\pinlabel{$R'$} at 52 40
\endlabellist
\vspace{.1in}
    \includegraphics[width=100mm]{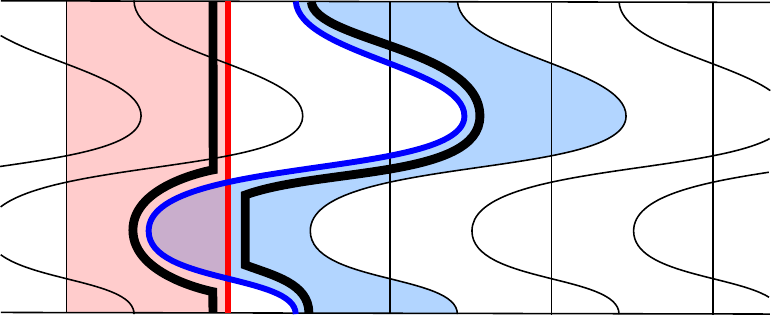}
    \vspace{.1in}
    \caption{A schematic of the infinite cyclic cover $M$ of $E:=S^3\setminus\nu(L)$. We indicate lifts $S_i$ of $S$ and $S'_j$ of $S'$. The region between $S_i,S_{i+1}$ is $E_i$ and the region between $S'_j,S'_{j+1}$ is $E'_j$. In this schematic, $d(S,S')=3$ as $E'_0$ intersects $E_i$ for the values $i\in\{0,1,2,3\}$. In the proof of Lemma \ref{lem:kakimizuinduct}, we obtain surfaces $R\subset E'_0$ and $R'\subset E_0$ whose union is isotopic to the cut-and-paste sum of $S'_0$ and $S_1$. It follows that one of $R,R'$ has Euler characteristic bounded below by $\min\{\chi(S),\chi(S')\}$. Fully compressing that surface (in $E'_0$ or $E_0$ accordingly) yields an incompressible surface that projects to the desired Seifert surface $S''$.}\label{fig:lemma_filtration}
\end{figure}
\begin{proof}[Proof of Lemma \ref{lem:kakimizuinduct}]

    Let $S_0$ be a lift of $S$ to $M$ and let $S_i$ denote the translate of $S_i$ under the action of $i\in\mathbb{Z}$. Let $E_i$ denote the closure of the region of $M$ bounded by $-S_i\cup S_{i+1}$. Since $n=d(S,S')$, by definition of covering distance we may isotope $S'$ and choose a lift $S'_0$ to $M$ such that for some integer $a$, $S'_0$ intersects $E_a$ and $E_{a+n-1}$ but does not intersect $E_i$ for any $i<a$ or $i> a+n-1$. Relabel the lifts $S_i$ if necessary so that $a=0$. Perturb $S'$ if necessary so that intersections of $S$ and $S'$ in $E$ are transverse and the interiors of $S,S'$ intersect in closed curves. Since $S,S'$ are each incompressible, each component of $S\cap S'$ is either essential in both $S$ and $S'$ or inessential in both $S,S'$. Remove any inessential intersections using a standard innermost circle surgery argument.

   Similarly, let $S'_j$ be the translate of $S'_0$ under the action of $j\in\mathbb{Z}$ and let $E'_j$ denote the closure of the region of $M$ bounded by $-S'_j\cup S'_{j+1}$.  Then $E'_0\cap E_0\neq\emptyset$ and $E'_i\cap E_0=\emptyset$ whenever $i>0$. Let $X$ be a regular neighborhood of $S'_0\cup(E_0\cap E_0')$ in $E_0'$. Since $S'_0\cap E_n=\emptyset$, we also have $X\cap E_n=\emptyset$. Let $R$ be the closure of $\partial X\setminus\partial E'_0$.

   We simultaneously produce another surface, exchanging the roles of $S,S'$. That is, we note that $E_j\cap E'_0=\emptyset$ whenever  $j<0$. Let $X'$ be a regular neighborhood of $S_1\cup(E_0\cap E'_0)$ in $E_0$. Since $S_1\cap E'_{-n}=\emptyset$, we also have $X'\cap E'_{-n}=\emptyset$. Let $R'$ be the closure of $\partial X'\setminus\partial E_0$.

   The union of $R$ and $R'$ is isotopic to the cut-and-paste sum of  $S_1$ and $S'_0$. (That is, the union of $R$ and $R'$ is isotopic to the embedded surface obtained by resolving $S_1\cup S'_0$ at intersections while respecting orientation. Note that if $S_1, S'_0$ are disjoint, as may be the case if $n=1$, then $R$ is a parallel copy of $S'_0$ while $R'$ is a parallel copy of $S_1$.) 
   We conclude $\chi(R)+\chi(R')=\chi(S)+\chi(S')$, so at least one of $\chi(R),\chi(R')$ is bounded below by $\min\{\chi(S),\chi(S')\}$. By potentially exchanging the roles of $S$ and $S'$ (and hence also $R$ and $R'$), assume without loss of generality that $\chi(R)\ge\min\{\chi(S),\chi(S')\}$. Since $S,S'$ intersect in essential curves, any closed components of $R$ are positive genus. Delete any closed components; we still have $\chi(R)\ge\min\{\chi(S),\chi(S')\}$.

Since $R\subset X\subset E_0'$, $R$ does not intersect any $S_i'$. Then $p(R)\cap S'=\emptyset$,  where $p(R)$ is the projection of $R$ to $E$ (and hence a Seifert surface for $L$). Moreover, $R$ does not intersect $E_j$ with $j<0$ or $j>n$. We already remarked that $X$ does not intersect $E_n$, so $R$ also does not intersect $E_n$. We conclude that if $R$ is incompressible, then the Seifert surface $p(R)$   satisfies $d(S,p(R))<n$, satisfying the claim (with $S'':=p(R)$).

Unfortunately, the surface $R$ may fail to be incompressible. Since $S,S'$ intersect in essential circles and $R$ is a subset of the cut-and-paste sum of $S_1$ and  $S'_0$, the surface $R$ does not include any 2-sphere components. 
Since $\partial E'_0$ is incompressible in $M$, if a surface in the interior of $E'_0$ is incompressible in $E'_0$ then it is also incompressible in $M$. 
Suppose $R$ is compressible in $M$ and hence in $E'_0$. Let $D$ be a compressing disk for $R$ in $E'_0$. 
Since $S_n$ is incompressible, again by a standard innermost circle argument we may assume $D$ does not intersect $S_n$. Then compressing $R$ along $D$ yields a new surface in $E'_0$ that has no 2-sphere components and does not intersect $E_n$. Repeat until obtaining a surface that is incompressible in $E'_0$ and hence also incompressible in $M$. Delete any closed components and call the result $\tilde{R}$. Since $\tilde{R}$ is obtained from $R$ by compression and deletion of positive-genus components, $\chi(\tilde{R})\ge\chi(R)\ge\min\{\chi(S),\chi(S')\}$. Let $S''$ denote the incompressible Seifert surface $p(\tilde{R})$. Since $\tilde{R}\subset E'_0\setminus E_n$, we find that the interiors of $S''$ and $S'$ are disjoint and $d(S,S'')<n$ as desired.
\end{proof}

Theorem \ref{thm:filtration} follows easily from Lemma \ref{lem:kakimizuinduct}.
\begin{proof}[Proof of Theorem \ref{thm:filtration}]
Let $v,v'$ be vertices in $\IS_k(L)$ for some $k$. We will construct a path in $\IS_k(L)$ from $v'$ to $v$, implying that $\IS_k(L)$ is connected. 
Each of $v,v'$ is represented by an incompressible Seifert surface $S,S'$ (respectively) with $\chi(S),\chi(S')\ge k$. Let $n:=d(S,S')$. By Lemma \ref{lem:kakimizuinduct}, there exists an incompressible Seifert surface $F_1$ (taking the place of $S''$) for $L$ representing the vertex $v_1\in\IS(L)$ such that 
$\chi(F_1)\ge\min\{\chi(S),\chi(S')\}\ge k$ (so in particular $v_1\in \IS_k(L)$), 
    the vertices $v_1,v'$ are adjacent in $\IS(L)$, and
   $d(F_1,S)<n$.

If $d(F_1,S)=0$, then by Observation \ref{observation:coveringzero} the surfaces $F_1, S$ are isotopic rel.\ boundary and we conclude there is a path $(v',v_1,v)$ in $\IS_k(L)$. Otherwise, repeat the above argument replacing $S'$ with $F_i$ to obtain a surface $F_{i+1}$ representing a vertex $v_{i+1}\in\IS(L)$ such that 
    $\chi(F_{i+1})\ge\min\{\chi(S),\chi(F_i)\}\ge \min\{\chi(S),\chi(S')\}\ge k$, (so in particular $v_{i+1}\in \IS_k(L)$), 
    the vertices $v_{i+1},v_i$ are adjacent in $\IS(L)$, and 
   $d(F_{i+1},S)<d(F_i,S)$. 
Repeat until obtaining a surface $F_a$ with $d(F_a,S)=0$ and hence (again by Observation \ref{observation:coveringzero}) the surface $F_a$ is isotopic rel.\ boundary to $S$. We conclude there is a path $(v',v_1,v_2,\ldots, v_a,v)$ in $\IS_k(L)$.
\end{proof}

The following corollary follows from Kakimizu \cite[Proposition 3.1(1)]{Kakimizu_paper}, but we prove it here for completeness. 
\begin{corollary}\label{cor:distance1}
    Let $S,S'$ be incompressible Seifert surfaces for a link $L$. Then $d(S,S')=1$ if and only if $S$ and $S'$ represent adjacent vertices in $\IS(L)$. 
    \end{corollary}

    \begin{proof}
        If $S,S'$ represent adjacent vertices, then up to isotopy rel.\ boundary the interiors of $S,S'$ are disjoint and hence $d(S,S')=1$.

        On the other hand, if $d(S,S')=1$, then it follows from Lemma \ref{lem:kakimizuinduct} that there is a third Seifert surface $S''$ such that $S''$ is disjoint from $S'$ in its interior and $d(S,S'')<1$. Then $d(S,S'')=0$ and Observation \ref{observation:coveringzero} shows that $S,S''$ are isotopic rel.\ boundary in $S^3$. We conclude that up to isotopy rel.\ boundary in $S^3$, the surfaces $S,S'$ have disjoint interior and thus represent adjacent vertices in $\IS(L)$.
    \end{proof}

\begin{remark}\label{genusremark}
     In \cite{prz-schultens}, Przytycki--Schultens give a similar but inequivalent definition of $\MS(L)$ -- in their notation, $\MS(E(L))$ -- that differs from $\MS(L)$ when $L$ admits minimal genus, disconnected surfaces. They show that $\MS(E(L))$ is contractible. If $L$ is non-split and alternating, then every oriented surface bounded by $L$ is connected: consider the classic exercise \cite[Proposition 4.8]{lickorish} showing that an incompressible positive-genus surface $F$ in the complement of $L$ admits a compressing disk intersecting $L$ in a single point. This would be violated by $F:=\partial\nu(S_1)$ if $S_1\sqcup S_2$ is a disconnected surface bounded by $L$ (possibly after compressing $F$ until reaching an incompressible splitting surface for $L$), since every simple closed curve of $F$ links $L$ zero times. (Alternatively, one could cite \cite[Corollary 5.2]{A_classification_of_spanning_surfaces_for_alternating_links}.)
    Therefore, in the settings considered in this paper, we can ignore this distinction (and hence $\MS(L)$ is contractible, although we do not require more than connectivity). We similarly need not distinguish between ``minimal genus" and ``maximal Euler characteristic," since all surfaces are connected. It remains open whether $\IS(L)$ is contractible (see \cite[\S1]{prz-schultens} for discussion).
    \end{remark}
    
In the case that $L$ is an alternating link, there is even more known about surfaces bounded by or containing $L$. It is a well-known theorem of \cite{crowell_genus_alt_link, murasugi_genus_of_alt_knot} that Seifert's algorithm applied to an alternating diagram yields a minimal genus Seifert surface. However,  a minimal genus Seifert surface for an alternating link need {\emph{not}} arise from Seifert's algorithm applied to an alternating diagram \cite[Theorem 1.2]{hirasawa_sakuma_minimal_genus}{\footnote{Thanks to Thomas Kindred for pointing out the following subtlety to the authors: while \cite{hirasawa_sakuma_minimal_genus} produces Seifert surfaces not arising from Seifert's algorithm applied to any {\emph{alternating}} diagram, it is unknown whether an alternating link can admit a minimal genus Seifert surface not arising from Seifert's algorithm applied to {\emph{some}} (not necessarily alternating) diagram.}}, and in general it is not clear what, if any, relationship is held between distinct minimal genus Seifert surfaces for an alternating link.

\begin{definition}
    Given a closed surface $F$ in $S^3$ containing a link $L$, we say that $F$ is {\emph{essential with respect to $L$}} when  $F\cap (S^3\setminus\nu(L))$ is essential in $S^3\setminus\nu(L)$. When $L$ is clear, we may simply write that $F$ is essential.
\end{definition}

We will make use of the following key lemma proved by Kindred \cite{kindred_representativity}.

    \begin{lemma}[Kindred \cite{kindred_representativity} (Crossing Tube Lemma)]\label{C.T.L}
        Given a nontrivial, reduced alternating diagram $\D$ of a non-split link $L$ and a closed, essential surface $F$ containing $L$, 
        there exists an isotopy of $S^3$ fixing $L$ after which $F$ has a standard tube above some crossing $c$ of $\D$ as in Figure \ref{fig:standardtube}. That is, $F$ admits a compressing disk lying above a small neighborhood of $c$ and that intersects $L$ in the two points of $L$ above $c$. 
    \end{lemma}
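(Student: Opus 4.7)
The plan is to put $F$ into Menasco-style standard position with respect to the alternating diagram $\D$ and extract a standard crossing tube from the resulting combinatorial picture. View $\D$ as lying on a projection sphere $S^2 \subset S^3$ with small ``crossing bubbles'' $B_c$ inserted at each crossing $c$, so that $S^3$ decomposes as $B^+ \cup_{S^2} B^-$ and $L$ lies on $S^2$ except where it passes through the bubbles. Following Menasco's standard argument for alternating link complements, I would isotope $F$ fixing $L$ so that, away from the bubbles, $F \cap S^2$ is a disjoint union of simple closed curves (plus the arcs of $L$ lying on $S^2$), and, inside each bubble $B_c$, $F \cap B_c$ is a disjoint union of standard saddle disks. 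Using that $F$ is essential with respect to $L$, innermost-disk and outermost-arc reductions let me minimize $F\cap S^2$, removing any curve bounding a disk in $B^\pm$ disjoint from $L$.

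Next I would analyze the induced cell decomposition of $F$ whose 2-cells are the components of $F\cap B^\pm$ together with the saddle disks inside each $B_c$. The hypothesis that $\D$ is nontrivial and reduced implies every face of $\D$ on $S^2$ is a disk with alternating over/under crossings around its boundary; combining this with the structure of the saddle disks should control how curves of $F\cap S^2$ propagate from face to face of $\D$. The key output I am aiming for is a crossing $c$ at which some component of $F\cap B^+$ is a disk whose entire boundary is a single simple closed curve lying in a small neighborhood of $c$ and meeting $L$ in exactly the two points of the overstrand above $c$. This disk is the desired compressing disk, and its existence is equivalent to $F$ having a standard tube above $c$.

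To find such a $c$, I would argue by contradiction: suppose no crossing admits a standard tube. Among all saddles at all crossings, choose one whose ``top'' arc (on $\partial B_c \cap B^+$) together with the nearby curves of $F\cap S^2$ bounds an innermost region on $F$ of some suitable complexity. Using the alternating, reduced structure of $\D$ combined with essentiality of $F$ (so no compressions are available) and non-splitness of $L$ (so no splitting sphere for $L$ can intrude), one should be able to show that this innermost region must itself be the sought compressing disk, contradicting the assumption. The Euler characteristic of $F$, compared against the saddle-count weighted by the face structure of $\D$, should give the numerical fuel for this innermost choice.

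The main obstacle I anticipate is precisely this last step: promoting generic Menasco standard position (which produces only some saddle configuration at each crossing) to the much sharper conclusion of a standard tube. Menasco normalization alone is compatible with many saddle patterns, and pinning down a standard tube requires a genuinely global use of the alternating condition beyond what is needed just to normalize $F$. Ruling out degenerate configurations such as nested saddles or saddles whose boundary arcs spiral through many faces of $\D$, while preserving all the control gained from standard position, is where I expect the argument to become technically delicate, and it is plausible that the cleanest route uses a complexity on pairs (saddle, innermost subregion of $F$) rather than on $F\cap S^2$ alone.
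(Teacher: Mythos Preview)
The paper does not prove this lemma; it is quoted as a result of Kindred \cite{kindred_representativity} and used as a black box in the proof of Theorem~\ref{thm:main}. There is therefore no proof in the paper to compare your proposal against.

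That said, your outline is pointed in the right direction methodologically: Kindred's argument does proceed by putting $F$ into Menasco-style crossing-ball position relative to the alternating diagram and then analyzing the resulting saddle combinatorics. However, the entire content of the lemma lies in the step you yourself label ``the main obstacle'': upgrading generic Menasco standard position (which only gives \emph{some} saddle pattern at each crossing) to the existence of a genuine standard tube at \emph{some} crossing. For this step you offer only a heuristic---an unspecified innermost/complexity argument backed by an unspecified Euler-characteristic count---and you explicitly concede that you have not pinned it down and that ruling out nested or spiraling saddle configurations is ``technically delicate.'' That gap is real and is precisely where the nontrivial work in Kindred's paper sits; as written, your proposal is a plausible plan of attack rather than a proof.
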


    \begin{figure}
    \centering
    \includegraphics[width=0.7\linewidth]{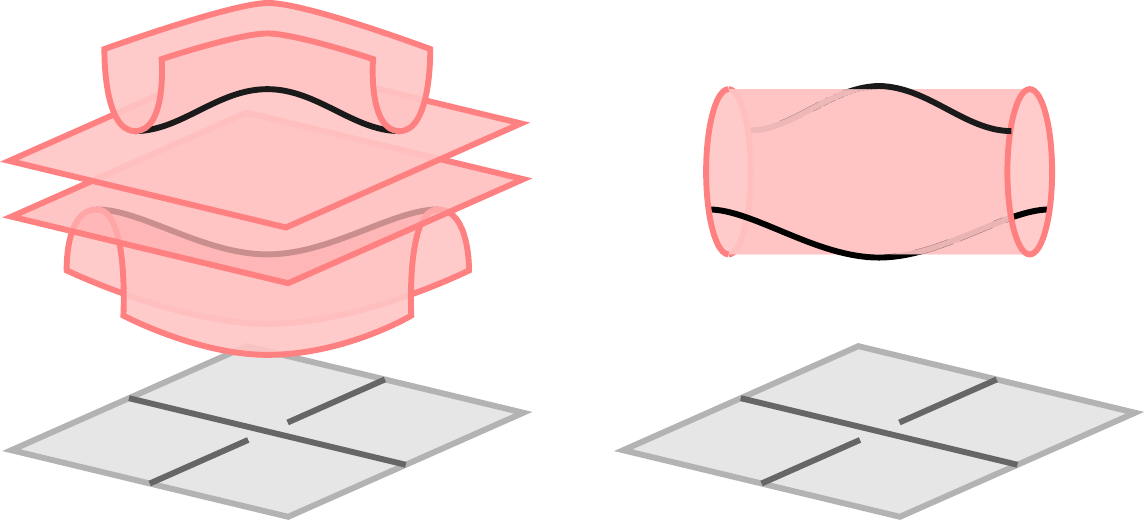}
    \caption{Left: a generic essential surface $F$ containing a link $L$. We feature the portion of $F$ lying above a neighborhood of some crossing in the diagram $\D$ (shown at the bottom of the figure). This subset of $F$ includes two disks that contain two arcs of $L$ and potentially other disks that do not meet $L$. Right: above this crossing, the surface $F$ includes a standard tube containing the two strands that project to the crossing.}
    \label{fig:standardtube}
\end{figure}

\section{Proof of Theorem \ref{thm:main}}

 We will reduce to the setting of incompressible surfaces using the following lemma, which is based on a commonly used fact in the study of surfaces in 4-manifolds. See \cite[Corollary 4]{boyle} for a general statement and proof. Here, when we say a Seifert surface $S_i$ is obtained by compressing another Seifert surface $\Sigma_i$, we mean that we compress $\Sigma_i$ and delete any resulting closed components.
 
\begin{lemma}\label{lem:stab}
    Let $\Sigma$ be a connected Seifert surface for a link $L$. Suppose that $S$ is a connected Seifert surface obtained by compressing $\Sigma$ along a disk (possibly deleting a resulting closed component). Push the interiors of both surfaces into $B^4$ and let $T$ denote the standard torus $S^4$. Then $\Sigma$ is isotopic rel.\ boundary to $S\#(g(\Sigma)-g(S))T$.
\end{lemma}
\begin{proof}
If the compression takes place along a non-separating curve, then $\Sigma$ is obtained from $S$ by attaching one tube, i.e.,\ surgery along a framed arc $\gamma$. Because $S$ is boundary parallel in $B^4$, we have $\pi_1(B^4\setminus S)\cong\mathbb{Z}$ with generator represented by a meridian of $S$. Therefore, any two arcs with endpoints on $S$ and interior in $B^4\setminus S$ are isotopic keeping ends on $S$ and interior disjoint from $S$. In ambient dimension four, the normal bundle $N(\gamma)$ of the arc $\gamma$ rel.\ boundary admits framings in bijection with $\pi_1(SO(3))\cong\mathbb{Z}/2\mathbb{Z}$. Such a framing corresponds with a choice of how to attach a tube to $S$ along $\gamma$: we choose a trivialization $\langle v_1, v_2, v_3\rangle$ of $N(\gamma)$ with $\langle v_1, v_2\rangle$ tangent to $S$ at the ends of $\gamma$ and then surger $S$ along the 3-dimensional 1-handle $\gamma\times\langle v_1,v_2\rangle$. Exactly one of the two potential framings of $N(\gamma)$ yields a 1-handle over which the orientation of $S$ extends, while the other framing yields a 1-handle over which the orientation does not extend. Since $\Sigma$ is oriented, the framing of $\gamma$ is uniquely determined.     We conclude that there is a unique way of attaching an oriented tube to $S$ in $B^4$, one of which would yield $S\# T$, and hence that $\Sigma$ is isotopic rel.\ boundary to $S\# T$.

On the other hand, support that $S$ is obtained by compressing $\Sigma$ along a separating curve and then deleting a closed component $S'$ disjoint from $S$.  Recall that a closed surface in $S^3$ always bounds a handlebody in $B^4$ (see e.g., discussion in the beginning of\ \cite[\S 1.2]{hatcher_3m}: every positive genus surface in $S^3$ is compressible and every smooth $S^2$ in $S^3$ bounds a 3-ball). Let $V$ be a handlebody bounded by $S'$ into $B^4$. Let $\gamma$ be a framed arc connecting $S, S'$ such that surgery along $\gamma$ yields $\Sigma$. Push $S', V, \gamma$ into the interior of $B^4$. Since $\gamma$ is 1-dimensional and disjoint in its interior from $V$ and $\pi_1(B^4\setminus V)=1$, as the nonseparating case we conclude that $\gamma$ is isotopic to any other arc connecting $S$ to $S'=gT$ that is disjoint from $V$ and also that the framing on $\gamma$ is determined by the orientation of $\Sigma$. Therefore, $\Sigma$ is isotopic rel.\ boundary in $B^4$ to  $S\# (g(S')T)$.
\end{proof}

\begin{figure}
\labellist
\pinlabel{$\D$} at 42 68
\pinlabel{\textcolor{red}{$S_1$}} at 65 225
\pinlabel{\textcolor{blue}{$S_2$}} at 188 225
\pinlabel{\textcolor{red}{band}} at 65 105
\pinlabel{\textcolor{blue}{band}} at 192 105
\pinlabel{\textcolor{purple}{$b$}} at 430 110
\pinlabel{$\D'$} at 652 68
\pinlabel{\textcolor{red}{$S'_1:=S_1\setminus b$}} at 655 225
\pinlabel{\textcolor{blue}{$S'_2:=\tilde{S}_2\setminus b$}} at 820 227
\pinlabel{$L'$} at 740 225
\pinlabel{isotopy} at 280 125
\pinlabel{rel.\ $L$} at 280 105
\pinlabel{surger $L$} at 585 125
\pinlabel{along $b$} at 585 105
\endlabellist
    \includegraphics[width=140mm]{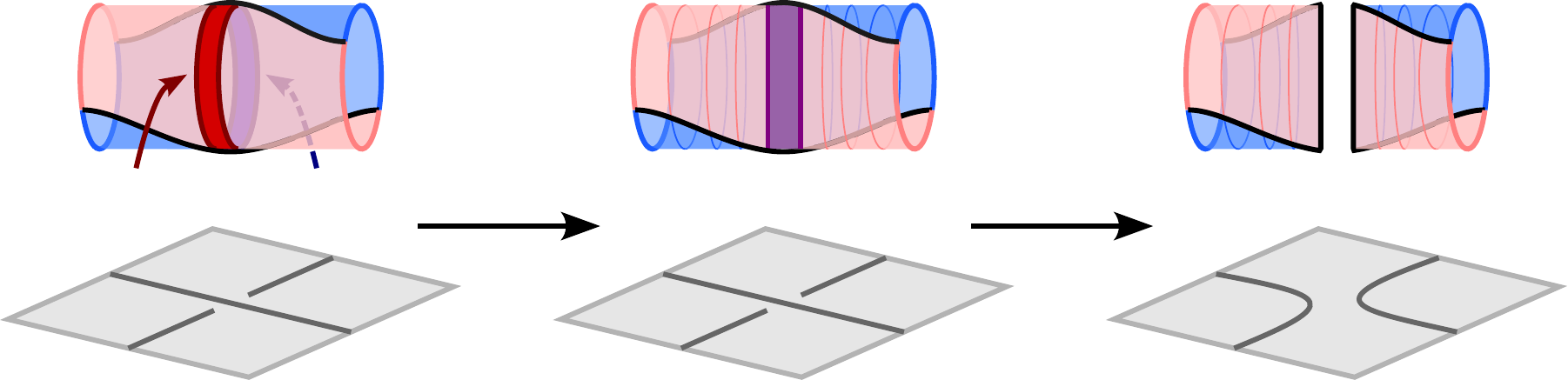}
    \caption{Left: incompressible surfaces $S_1,S_2$ bounded by $L$. By Lemma \ref{C.T.L}, the essential surface $S_1\cup S_2$ includes a standard tube over some crossing $c$ in a reduced, alternating diagram $\D$ for $L$. Center: we isotope $S_1,S_2$ near the standard tube so that they agree in a band $b$. Elsewhere, we add $g(S_1)-g(S_2)$ tubes to $S_2$ to obtain a surface $\tilde{S}_2$ of the same genus as $S_1$. Right: deleting the band $b$ from $S_1,\tilde{S}_2$ yields surfaces $S'_1,S'_2$ bounded by a link $L'$.}\label{fig:proof}
\end{figure}

We will make use of the following subcase of Theorem \ref{thm:main} in the proof of the main theorem.






\begin{lemma}\label{lem:distance1}
    Let $\Sigma_1,\Sigma_2$ be same-genus Seifert surfaces for a non-split, alternating link. Suppose that $\Sigma_1,\Sigma_2$ can be fully compressed to incompressible Seifert surfaces $S_1,S_2$ (respectively) with $d(S_1,S_2)\le 1$. Then $\Sigma_1$ and $\Sigma_2$ are isotopic rel.\ boundary in $B^4$.
\end{lemma}

In the proof of Lemma \ref{lem:distance1}, we use the following easy fact.

\begin{proposition}\label{prop:stilldisjoint}
    If $\Sigma_1, \Sigma_2$ are Seifert surfaces for a non-split alternating link $L$ and $\Sigma_1,\Sigma_2$ have disjoint interior, then $\Sigma_1,\Sigma_2$ can be compressed to incompressible Seifert surfaces $S_1, S_2$ (respectively) that also have disjoint interior.
\end{proposition}
\begin{proof}
    Suppose $D$ is a compressing disk for $\Sigma_1$. Let $C$ be a curve of intersection in $D\cap\Sigma_2$ that is innermost on $D$, so $C$ bounds a disk $D'\subset D$ whose interior is disjoint from $\Sigma_2$. Compress $\Sigma_2$ along $D'$ and delete any closed components formed. Repeat until the interior of $D$ is disjoint from $\Sigma_2$, and finally perform the compression along $\Sigma_1$. Repeat until obtaining an incompressible Seifert surface $S_1$. Now any compressing disk for $\Sigma_2$ intersects $S_1$ in inessential curves that can similarly be removed by an innermost circle argument, so compress $\Sigma_2$ along disks disjoint from $S_1$ to obtain an incompressible surface $S_2$ whose interior is disjoint from $S_1$.

\end{proof}

\begin{proof}[Proof of Lemma \ref{lem:distance1}]
    If $d(S_1,S_2)=0$, then by Observation \ref{observation:coveringzero} the surfaces $S_1$ and $S_2$ are isotopic rel.\ boundary. Then up to isotopy rel.\ boundary in $S^3$, the surfaces $\Sigma_1,\Sigma_2$ are each obtained from $S_1$ by surgery along $n$ arcs for some $n$. By Lemma \ref{lem:stab}, $\Sigma_1$ and $\Sigma_2$ are isotopic rel.\ boundary in $B^4$.

    Now suppose $d(S_1,S_2)\le 1$. We will prove that $\Sigma_1,\Sigma_2$ are isotopic rel.\ boundary in $B^4$ by inducting on the genus $g$ of $\Sigma_1, \Sigma_2$ -- although for convenience, we will phrase this as a condition on Euler characteristic, inducting on Euler characteristic decreasing from the maximum possible value of $1$.

    If $\chi(\Sigma_i)=1$, then is a well-known consequence of the 3-dimensional Schoenflies theorem (see e.g.,\ discussion in \cite[Section 1.1]{hatcher_3m}) that the disks $\Sigma_1,\Sigma_2$ are isotopic rel.\ boundary in $S^3$ and hence in $B^4$.

Now suppose that Lemma \ref{lem:distance1} holds for Seifert surfaces of Euler characteristic strictly greater than $\chi(\Sigma_i)$. If $d(S_1,S_2)=0$ then $\Sigma_1$ and $\Sigma_2$ are isotopic rel.\ boundary in $B^4$, so assume that $d(S_1,S_2)=1$. By Corollary \ref{cor:distance1}, we may apply an isotopy rel.\ boundary in $S^3$ to $S_1,S_2$ so that they have disjoint interior. Then $S_1\cup S_2$ is an embedded surface $F$ that is essential with respect to $L$.

Choose a reduced, alternating diagram $\D$ for $L$. Lemma \ref{C.T.L} implies that $F$ includes a standard tube over some crossing $c$ of $\D$ as in Figure \ref{fig:proof} (left). Fixing $L$, isotope $S_1$ and $S_2$ so that their interiors agree precisely at a band $b$ above $c$ as in Figure \ref{fig:proof} (center). Let $\D'$ be the diagram obtained by smoothing $\D$ at $c$, respecting orientations, and let $L'$ be the link described by $\D'$. Since $\D$ is alternating, non-split, and reduced, the smoothed diagram $\D'$ is alternating and non-split.

Without loss of generality, assume $g(S_1)\ge g(S_2)$. Attach tubes to $S_2$ (away from $b$) to obtain a surface $\tilde{S}_2$ of the same genus as $S_1$. By Lemma \ref{lem:stab}, $\tilde{S}_2$ is isotopic rel.\ boundary in $B^4$ to $S_2\#(g(S_1)-g(S_2))T$, where $T$ is a standard torus in $S^4$.


Consider the surfaces $S'_1:=S_1\setminus b$, $S'_2:=\tilde{S}_2\setminus b$ bounded by the link $L'$. 
We have $\chi(S'_1)=\chi(S'_2)=\chi(S_1)+1>\chi(\Sigma_i)$. By Proposition \ref{prop:stilldisjoint}, $S'_1, S'_2$ compress to surfaces with disjoint interior and hence covering distance at most 1. Then by inductive hypothesis, we find that $S'_1$ and $S'_2$ are isotopic rel.\ boundary in $B^4$. Since $\Sigma_1=S'_1\cup b$ and $\tilde{S}_2=S'_2\cup b$, we find also that $S_1$ and $\tilde{S}_2$ are isotopic rel.\ boundary in $B^4$.

By Lemma \ref{lem:stab}, the surface 
$\Sigma_1$ is isotopic rel.\ boundary in $B^4$ to \[S_1\#(g(\Sigma_1)-g(S_1)) T.\] Similarly, $\Sigma_2$ is isotopic rel.\ boundary in $B^4$ to \begin{align*}S_2\# (g(\Sigma_2)-g(S_2))T&=(S_2\#(g(S_1)-g(S_2))T)\#(g(\Sigma_1)-g(S_1))T\\&=\tilde{S}_2\# (g(\Sigma_1)-g(S_1))T.\end{align*} Since $S_1$ and $\tilde{S}_2$ are isotopic rel.\ boundary in $B^4$, we conclude that $\Sigma_1$ and $\Sigma_2$ are isotopic rel.\ boundary in $B^4$ as claimed.
\end{proof}

\begin{remark}
    In the proof of Lemma \ref{lem:distance1}, it is essential that we are concluding isotopy in $B^4$ rel.\ boundary rather than in $S^3$. Even if $S'_1,S'_2$ were isotopic rel.\ boundary in $S^3$, an isotopy from $S'_1$ to $S_2'$ might pass $S'_1$ through the band $b$ and hence not extend to an isotopy from $S_1$ to $\tilde{S}_2$. Thus, the induction fails to prove that same-genus Seifert surfaces for non-split, alternating links are unique up to isotopy rel.\ boundary in $S^3$, even when we restrict to minimal genus Seifert surfaces. This is consistent with the fact that minimal genus Seifert surfaces for non-split, alternating links are generally {\emph{not}} unique up to isotopy in $S^3$, as discussed in Section \ref{sec:intro}. 
\end{remark}

We can now prove Theorem \ref{thm:main}.

\begin{mainthm}
     Let $\Sigma_1,\Sigma_2$ be same-genus Seifert surfaces for a non-split, alternating link. 
     Then $\Sigma_1$ and $\Sigma_2$ are isotopic rel.\ boundary in $B^4$.
\end{mainthm}

\begin{proof}


     Let $\chi:=\chi(\Sigma_i)$. Let $S_i$ be an incompressible surface obtained from $\Sigma_i$ by $k_i$ compressions. Assume $k_1\le k_2$, so $\chi\le \chi(S_1)\le \chi(S_2)$.

  By Theorem \ref{thm:filtration}, there exist incompressible Seifert surfaces $S_1=F_0,F_1,F_2,\ldots, F_n,F_{n+1}=S_2$ such that $\chi(F_j)\ge \chi(S_2)\ge\chi$ and the surfaces $F_j, F_{j+1}$ have disjoint interior for each $j\in\{0,\ldots, n\}$. For each $j=1,\ldots, n$, let $\Sigma'_j$ be a Seifert surface obtained from $F_j$ by attaching tubes to $F_j$ such that $\chi(\Sigma'_j)=\chi$ for all $j$. Let $\Sigma'_0:=\Sigma_1, \Sigma'_{n+1}:=\Sigma_2$.

  Now for each $j=0,\ldots, n$, the surfaces $\Sigma'_j, \Sigma'_{j+1}$ are same-genus Seifert surfaces for $L$ that compress to incompressible Seifert surfaces $F_j, F_{j+1}$. Note that $d(F_j, F_{j+1})\le 1$ since the interiors of $F_j, F_{j+1}$ are disjoint. Then By Lemma \ref{lem:distance1}, $\Sigma'_j, \Sigma'_{j+1}$ are isotopic rel.\ boundary in $B^4$. Since $\Sigma_1=\Sigma'_0, \Sigma_2=\Sigma'_{n+1}$, we conclude that $\Sigma_1,\Sigma_2$ are isotopic rel.\ boundary in $B^4$. 
\end{proof}

Corollaries \ref{cor1} and \ref{cor2} now follow easily.

\begin{cor1}
  Up to isotopy rel.\ boundary in the 4-ball, a non-split, alternating link $L$ bounds a unique minimal genus Seifert surface $S$. Any other Seifert surface for $L$ is isotopic rel.\ boundary in $B^4$ to a connected sum of $S$ and some number of standard tori in $S^4$.
\end{cor1}
\begin{proof}
Let $S$ be a minimal genus Seifert for $L$.
    By Theorem \ref{thm:main}, any other minimal genus Seifert surface for $L$ is isotopic to $S$ rel.\ boundary in $B^4$. Let $\Sigma$ be a Seifert surface for $L$, and let $S'$ be a genus-$g(\Sigma)$ Seifert surface obtained by attaching $n$ tubes to $S$. By Theorem \ref{thm:main}, $\Sigma$ is isotopic rel.\ boundary in $B^4$ to $S'$, which by Lemma \ref{lem:stab} is isotopic rel.\ boundary in $B^4$ to $S\#_n T$.
\end{proof}

\begin{cor2}
  Let $S_1, S_2$ be Seifert surfaces for a non-split, alternating link $L$ in $S^3$. Viewing $S^3$ as an equator of $S^4$, push the interiors of $S_1$ and $S_2$ off $S^3$ toward opposite normal directions of $S^3$ to obtain surface $S_1^+, S_2^-$ in $S^4$ with the same boundary but disjoint interior. The closed surface $S_1^+\cup S_2^-$ bounds a handlebody smoothly embedded in $S^4$, i.e.,\ is smoothly unknotted.
\end{cor2}

\begin{proof}
   Let $S$ be a minimal genus Seifert surface for $L$. Obtain a closed surface $F$ in $S^4$ by gluing two copies of $S$, with the interiors of each copy pushed off $S^3$ toward opposite sides. Then $F$ is isotopic to the boundary of the handlebody $S\times I$.

   By Theorem \ref{thm:main}, $S_1^+\cup S_2^-$ is isotopic to $F\#_n T$, where $n=(g(S_1)-g(S))+(g(S_2)-g(S))$. We conclude $S_1^+\cup S_2^-$ also bounds a handlebody.
\end{proof}

\section{Future open questions}

We end with some natural open questions for other families of links. We first suggest one consider almost-alternating links, i.e.,\ links that admit a diagram which becomes alternating after applying one crossing change. (Note the distinction between being almost-alternating vs.\ admitting a crossing change to some alternating link.) 


\begin{questions}\label{questions:almostalternating}\leavevmode
    \begin{enumerate}[label=(\alph*)]
        \item\label{q:item1} If $K$ is almost-alternating, must any two same-genus Seifert surfaces for $K$ be smoothly isotopic in $B^4$?
        \item\label{q:item2} If $K$ is almost-alternating, must the union of any two Seifert surfaces for $K$ yielding a closed surface in $S^4$ (as in Question \ref{q:seifertunknotting}) be smoothly unknotted?
        \end{enumerate}
        An affirmative answer to \ref{q:item1} implies an affirmative answer to \ref{q:item2}.
\end{questions}

Arguably, it would be more interesting if the answer to Question \ref{questions:almostalternating}\ref{q:item1} is ``no," as this would provide a sharpness result for Theorem \ref{thm:main} is sharp.

\begin{problem}
Find same-genus Seifert surfaces for an almost-alternating knot $K$ that are not isotopic in $B^4$. Even better: arrange for these surfaces to be minimal genus.
\end{problem}

Theorem \ref{thm:main} is the first result theorem proving uniqueness of minimal genus Seifert surfaces for a family of knots up to isotopy rel.\ boundary in $B^4$ without having uniqueness up to isotopy in $S^3$.

\begin{question}
    How many minimal genus Seifert surfaces does a given hyperbolic knot bound up to isotopy rel.\ boundary in $B^4$?
\end{question}

Hyperbolic knots may bound more than one minimal genus Seifert surface up to isotopy rel.\ boundary in $B^4$ \cite{hayden-kim-miller-park-sundberg}. However, a hyperbolic knot bounds a finite number of minimal genus Seifert surfaces in $S^3$ (and in fact, a finite number of Seifert surfaces of any fixed genus) according to constraints from normal surface theory \cite{schubert} (see also \cite[Theorem 4]{jpw}). Perhaps a similar analysis could be used to give a nontrivial explicit upper bound on the number of Seifert surfaces for a hyperbolic knot up to isotopy rel.\ boundary in $B^4$.

\bibliography{bib}
\bibliographystyle{alpha}

\end{document}